\newtheorem{Le}{Lemma}
\newtheorem{The}{Theorem}
\newtheorem{Pro}{Proposition}
\theoremstyle{definition}
\newtheorem{Fact}{Fact}
\newcommand{\vinc}[3]{
\begin{tikzpicture}[baseline = (X.base)]
	\useasboundingbox (0.1,0) rectangle (#1*0.23,0.1);
	\foreach \x/\y in {#2}
	{
		\draw (\x*0.2,0) node (X) {$\y$};
	}
	
	\foreach \z in {#3}
	{
		\ifnum 0<\z
			\ifnum \z<#1
				\draw[thick] (\z*0.2-0.07,-0.19) -- (\z*0.2+0.27,-0.19);
			\fi
		\fi
		
		\ifnum 0=\z
			\draw[thick] (0.07,0.1) -- (0.07,-0.19) -- (0.21,-0.19);
		\fi
		
		\ifnum \z=#1
			\draw[thick] (\z*0.2+0.14,0.1) -- (\z*0.2+0.14,-0.19) -- (\z*0.2,-0.19);
		\fi
	}
\end{tikzpicture}
}
\def\bSac{\vinc{3}{1/2,2/1,3/3}{2}}
\def\baSc{{\vinc{3}{1/2,2/1,3/3}{1}}}
\def\bSca{\vinc{3}{1/2,2/3,3/1}{2}}
\def\bcSa{\vinc{3}{1/2,2/3,3/1}{1}}
\def\cSab{\vinc{3}{1/3,2/1,3/2}{2}}
\def\aScb{\vinc{3}{1/1,2/3,3/2}{2}}
\def\caSb{\vinc{3}{1/3,2/1,3/2}{1}}
\def\aScb{\vinc{3}{1/1,2/3,3/2}{2}}
\def\acSb{\vinc{3}{1/1,2/3,3/2}{1}}
\def\ba{\vinc{2}{1/2,2/1}{1}}
\def\abF{\vinc{2}{1/1,2/2}{2}\ }
\def\baF{\vinc{2}{1/2,2/1}{2}\ }
\def\mn{\mbox{-}}
\def\des{\mathsf{des}}
\def\rlm{\mathsf{rlmin}}
\def\rlM{\mathsf{rlmax}}
\def\st{{\scriptstyle \mathsf{ST}}}
\def\stp{{\scriptstyle \mathsf{ST'}}}
\title{The equidistribution of some length three 
vincular patterns on~$S_n(132)$} 
\author{{\sc Vincent}
}
\author{
Vincent {\sc Vajnovszki}\\ 
{\small LE2I, Universit\'e de Bourgogne}\\
{\small BP 47870, 21078 Dijon Cedex, France}\\
{\small \tt vvajnov@u-bourgogne.fr}
}
\begin{document}

\maketitle

\begin{abstract}

In 2012 B\'ona showed the rather surprising fact that the cumulative
number of occurrences of the classical patterns
$231$ and $213$ are the same on the set of permutations 
avoiding $132$, beside the pattern based statistics 
$231$ and  $213$ do not have the same distribution on this set.
Here we show that if it is required for the symbols playing the role of $1$ and $3$ 
in the occurrences of $231$ and $213$
to be adjacent, then the obtained statistics are
equidistributed on the set of $132$-avoiding
permutations.
Actually, expressed in terms of vincular patterns, 
we prove the following more general results: 
the statistics based on the patterns   
$\bSca$, 
$\bSac$ and 
$\baSc$,
together with other statistics,  have the same joint distribution on $S_n(132)$,
and so do the patterns $\bcSa$ and $\cSab$; 
and up to trivial transformations, these statistics 
are the only based on length three proper (not classical nor adjacent) vincular patterns 
which are equidistributed on a set of permutations avoiding a classical length three pattern.
\end{abstract}

\section{Introduction}

In \cite{BBS_10} Barnabei, Bonetti and Silimbani 
showed the equidistribution of some length three consecutive patterns involvement
statistics on the set of permutations avoiding the classical pattern 
$312$ (or equivalently, $132$).
And in \cite{Bona_13} B\'ona showed the surprising fact that the total
number of occurrences of the patterns $231$ and $213$ is the same on 
the set of $132$-avoiding permutations, beside the pattern based statistics 
$231$ and $213$ are not equidistributed on this set.
In \cite{Hom_12}, Homberger, generalizing B\'ona's result, gave the total number of 
occurrences of each classical length three pattern on the set of $123$-avoiding permutations, and 
showed that the total number of occurrences of the pattern 
$231$ is the same in the set of $123$- and $132$-avoiding
permutations, despite the pattern statistic $231$ has different distribution on 
the two sets.
Motivated by these, Burnstein and Elizalde gave in \cite{Bur_Eliz_13}, in a much more
general context, the total number of occurrences of any vincular pattern 
of length three on $231$-avoiding (or equivalently, $132$-avoiding) permutations.

In this paper we show that, on the set of $132$-avoiding permutations, 
the vincular pattern based statistics $\bSca$, $\bSac$ and $\baSc$
are equidistributed, and so are  $\bcSa$ and $\cSab$; 
and numerical evidences show that, up to trivial transformations,
these patterns are the only length three proper (not classical nor adjacent)
vincular patterns equidistributed 
on a set of permutations avoiding a classical length three pattern.

It is worth to mention that, on the set of unrestricted permutations,
the statistics $\bcSa$ and $\cSab$ are trivially equidistributed, and 
so are $\bSca$ and $\bSac$ (which is all but obvious on $132$-avoiding permutations),
and this last distribution is different from that of $\baSc$.

More precisely, we show bijectively the equidistribution on $132$-avoiding
permutations of the multistatistics
\begin{itemize}
\item $(\bSca,\bSac,\rlm,\rlM)$
      and $(\bSac,\bSca,\rlM,\rlm)$, 
\item $(\bSca,\des)$  and $(\bSac,\des)$, 
\item $(\baSc,\des,\abF)$ and $(\bSac,\des,\abF)$,
\item $(\bcSa,\cSab,\des)$ and $(\cSab,\bcSa,\des)$,
\end{itemize}
where 
$\rlM,\rlm$ and $\des$ are respectively, the number of 
right-to-left maxima, right-to-left minima and descents.
The corresponding bijections (the last of them being straightforward)
are presented in Subsection 3.

\section{Notations and definitions}
\label{Notations}

A {\it permutation} of length $n$ is a bijection from the set $\{1,2,\ldots, n\}$
to itself and we write permutations in {\it one-line notation}, that is,
as words $\pi=\pi_1\pi_2\ldots \pi_n$, where $\pi_i$ is the image of
$i$ under $\pi$. We let $S_n$ denote the set of permutations of
length $n\geq 0$, and $S=\cup_{n\geq 0}S_n$. 

\subsection*{Vincular patterns}
\label{vinc-pat}

Let $\sigma\in S_k$ and  $\pi=\pi_1\pi_2\ldots \pi_n\in S_n$, 
$k\leq n$, be two permutations.
One says that  $\sigma$ occurs as a (classical) pattern in $\pi$
if there is a sequence $1\leq i_1<i_2<\cdots<i_k\leq n$ such that
$\pi_{i_1}\pi_{i_2}\cdots \pi_{i_k}$ is order-isomorphic with
$\pi$. For example, $231$ occurs as a pattern in $13452$,
and the three occurrences of it are $342$, $352$ and $452$.

Vincular patterns were introduced in \cite{BabSteim}
and they were extensively studied since then (see 
Chapter 7 in \cite{Kit} for a comprehensive description of results on these patterns). 
Vincular patterns 
generalize classical patterns and they are defined as follows:
\begin{itemize}
\item Any pair of two adjacent letters may now be underlined, which means that the 
corresponding letters in the permutation must be 
adjacent.
(The original notation for vincular patterns uses dashes: 
the absence of a dash between two letters of a pattern means that these letters 
are adjacent in the permutation.)  For example, the pattern 
$\vinc{3}{1/2,2/1,3/3}{2}$ occurs in the permutation 425163 four times, namely, as 
the subsequences $425$, $416$, $216$ and $516$. Note that, the subsequences $426$ and $213$ are {\em not} 
occurrences of the pattern because their last two letters are not adjacent in the permutation. 
\item If a pattern begins (resp., ends) with a 
hook then its occurrence is required to begin (resp., end) with the leftmost (resp., rightmost) 
letter in the permutation. 
(In the original notation the role of hooks was played by square brackets.) 
For example, there are two occurrences of the pattern 
$\vinc{3}{1/2,2/1,3/3}{0,2}$ in the permutation $425163$, which are the subsequences 
$425$ and $416$.
\end{itemize}


\subsection*{Statistics}

A {\em statistic} on a set of permutations is simply a function from the set 
to $\mathbb{N}$, and a {\it multistatistic} is a tuple of statistics.
A classical example of statistic on $S_n$ is the descent number
$$
\displaystyle 
\des\, \pi = \text{card}\, \{i\ :\ 1\leq i<n, \pi_i>\pi_{i+1}\},
$$
for example $\des\,45312=2$.

In a permutation $\pi=\pi_1\pi_2\ldots \pi_n$, $\pi_i$ is a {\it right-to-left maximum} 
if $\pi_i>\pi_j$ for all $j>i$;
and the number of right-to-left maxima of $\pi$ is denoted by 
$\rlM\, \pi$.
Similarly, $\pi_i$ is a  {\it right-to-left minimum} if
$\pi_i<\pi_j$ for all $j>i$; 
and the number of right-to-left minima of $\pi$ is denoted by $\rlm\,\pi$.
Both, $\rlM$ and $\rlm$ are statistics on $S_n$.

For a set of permutations $S$, two statistics $\st$ and $\st'$ have the same distribution 
(or are equidistributed) on $S$
if, for any $k$, 
$$
\mathrm{card}\{\pi\in S: \st\,\pi=k\}=
\mathrm{card}\{\pi\in S: \stp\,\pi=k\},
$$
and the multistatistics $(\st_1,\st_2,\ldots,\st_p)$ and
$(\st_1',\st_2',\ldots,\st_p')$ have the same distribution if, for any 
$p$-tuple $k=(k_1,k_2,\ldots, k_p)$,
$$
\mathrm{card}\{\pi\in S: (\st_1,\st_2,\ldots,\st_p)\,\pi=k\}=
\mathrm{card}\{\pi\in S: (\st_1',\st_2',\ldots,\st_p')\,\pi=k \}.
$$

For a permutation $\pi$ and a (vincular) patterns $\sigma$
we denote by $(\sigma)\,\pi$ the number of occurrences of  
this pattern in $\pi$, and $(\sigma)$ becomes 
a permutation statistic. For example,
$(\ba)\,\pi$ is $\des\,\pi$; 
$(21)\,\pi$ is the inversion number of $\pi$; and
$(\abF)\pi$ is the last value of $\pi$ minus one.
Similarly, for a set of (vincular) patterns $\{\sigma,\tau,\ldots \}$,
we denote by $(\sigma+\tau+\cdots)\,\pi$ the number of occurrences of  
these patterns in $\pi$.

\subsection*{Sum decomposition}

For a permutation $\pi$, $|\pi|$ denotes its length (and so, $\pi\in S_{|\pi|}$),
and for two permutations $\alpha$ and~$\beta$
\begin{itemize}
\item the {\it skew sum} of $\alpha$ and $\beta$, denoted $\alpha\ominus\beta$,
is the permutation $\pi$ of length  $|\alpha|+|\beta|$ with
$$
\pi_i=
\left\{ \begin {array}{ccl}
\alpha_i+|\beta| & {\rm if} & 1\leq i\leq |\alpha|, \\
\beta_{i-|\alpha|} & {\rm if} & |\alpha|+1\leq i\leq |\alpha|+|\beta|,
\end {array}
\right.
$$
and

\item the {\it direct sum} of $\alpha$ and $\beta$, denoted $\alpha\oplus\beta$,
is the permutation $\pi$ of length  $|\alpha|+|\beta|$ with
$$
\pi_i=
\left\{ \begin {array}{ccl}
\alpha_i & {\rm if} & 1\leq i\leq |\alpha|, \\
\beta_{i-|\alpha|}+|\alpha| & {\rm if} & |\alpha|+1\leq i\leq |\alpha|+|\beta|.
\end {array}
\right.
$$
\end{itemize}
It is easy to check the following.

\begin{Fact}
\label{fact_des}
For two permutations $\alpha$ and $\beta$,  
$\des\, \alpha\oplus\beta=\des\, \alpha+\des\, \beta$ and, when $\alpha$ is not
empty, $\des\, \alpha\ominus\beta=\des\, \alpha+\des\, \beta +1$. 
\end{Fact}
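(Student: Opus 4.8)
The plan is to compute both descent numbers directly from the definition $\des\,\pi=\mathrm{card}\{i: 1\le i<n,\ \pi_i>\pi_{i+1}\}$ by splitting the positions of the sum into three regions: the interior of the block coming from $\alpha$ (positions $1\le i<|\alpha|$), the single \emph{junction} position $i=|\alpha|$, and the interior of the block coming from $\beta$ (positions $|\alpha|<i<|\alpha|+|\beta|$). The guiding observation is that in both $\alpha\oplus\beta$ and $\alpha\ominus\beta$ each of the two blocks is obtained from $\alpha$, respectively $\beta$, by adding a constant to every entry; since adding a constant is order preserving, it does not change which pairs of consecutive entries form a descent. Hence the interior of the $\alpha$-block always contributes exactly $\des\,\alpha$ descents and the interior of the $\beta$-block exactly $\des\,\beta$ descents, in both the direct and the skew case.

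First I would dispose of these interior contributions by the monotonicity remark above, so that the two formulas differ only through the junction position $i=|\alpha|$. It remains to decide, in each case, whether $\pi_{|\alpha|}>\pi_{|\alpha|+1}$. For the direct sum the first block uses the values $\{1,\dots,|\alpha|\}$ and the second block the values $\{|\alpha|+1,\dots,|\alpha|+|\beta|\}$, so $\pi_{|\alpha|}=\alpha_{|\alpha|}\le|\alpha|<|\alpha|+\beta_1=\pi_{|\alpha|+1}$ and the junction is an ascent, contributing $0$; this yields $\des\,\alpha\oplus\beta=\des\,\alpha+\des\,\beta$. For the skew sum the value ranges of the two blocks are interchanged, so $\pi_{|\alpha|}=\alpha_{|\alpha|}+|\beta|>|\beta|\ge\beta_1=\pi_{|\alpha|+1}$ and the junction is a descent, contributing $1$, which gives the extra summand in $\des\,\alpha\ominus\beta=\des\,\alpha+\des\,\beta+1$.

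The only point requiring a little care is the existence of the junction position, and hence the degenerate cases. The direct-sum identity holds even when one of $\alpha,\beta$ is empty (both sides are then the descent number of the nonempty factor). For the skew sum the extra descent is genuinely created at position $|\alpha|$, which exists as an internal position only when both blocks are nonempty; the hypothesis that $\alpha$ is nonempty guarantees there is a left block, and I would note that a nonempty $\beta$ is likewise needed for position $|\alpha|+1$ to exist, so that the $+1$ is counted exactly once. I expect no real obstacle here: the statement reduces to the bookkeeping of one boundary comparison, and the whole argument is a short case check rather than anything that needs a clever idea.
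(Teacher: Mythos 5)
Your argument is correct and complete. The paper offers no proof of this Fact at all (it is prefaced only by ``It is easy to check the following''), and your position-by-position bookkeeping---the two block interiors contribute $\des\,\alpha$ and $\des\,\beta$ because adding a constant to a block preserves its descents, and the single junction comparison decides between the two formulas---is precisely the routine verification being left to the reader. Your caveat about $\beta$ is moreover a genuine catch: as printed, the skew-sum identity fails when $\alpha$ is nonempty but $\beta$ is empty, since then $\alpha\ominus\beta=\alpha$ and $\des\,\alpha\ominus\beta=\des\,\alpha$ rather than $\des\,\alpha+\des\,\beta+1$; the Fact should require both factors to be nonempty. This oversight is harmless for the paper, because in every application (the proofs of Theorem~\ref{The_bij_mu} and Proposition~\ref{Pro_1}) the right operand of $\ominus$ has the form $\beta\oplus 1$, which is never empty.
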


The following characterization of $132$-avoiding permutations is folklore.

\begin{Fact}
\label{fact_decomposition}
For a non-empty permutation $\pi\in S_n$, $n\geq 1$, the following are equivalent:
\begin{itemize}
\item $\pi$ avoids $132$;
\item $\pi$ can uniquely be written as $(\alpha\oplus 1)\ominus \beta$,
      where  $\alpha$ and $\beta$ are (possibly empty)  $132$-avoiding permutations;
\item $\pi$ can uniquely be written as $\alpha\ominus(\beta\oplus 1)$,
      where  $\alpha$ and $\beta$ are (possibly empty) $132$-avoiding permutations.
\end{itemize}
\end{Fact}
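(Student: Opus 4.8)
The plan is to prove this folklore equivalence by isolating, in each decomposition, one distinguished element that splits $\pi$ into two independent blocks, and then showing that avoiding $132$ is \emph{exactly} the absence of a ``straddling'' occurrence across the split. I would start with the first item, using the position of the largest entry. Write $n=\pi_p$ and set $\alpha=\mathrm{st}(\pi_1\cdots\pi_{p-1})$, $\beta=\mathrm{st}(\pi_{p+1}\cdots\pi_n)$, where $\mathrm{st}$ denotes standardization. The key observation is that if some entry left of position $p$ were smaller than some entry right of it, say $\pi_i<\pi_j$ with $i<p<j$, then $\pi_i\,\pi_p\,\pi_j$ would be an occurrence of $132$, since $\pi_p=n$ is maximal. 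Hence avoiding $132$ forces every entry before $n$ to exceed every entry after $n$, which is precisely the statement $\pi=(\alpha\oplus1)\ominus\beta$; the blocks $\alpha$ and $\beta$, being subsequences of $\pi$, automatically avoid $132$ as well.

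For the converse I would suppose $\pi=(\alpha\oplus1)\ominus\beta$ with $\alpha,\beta$ both avoiding $132$ and rule out any occurrence $\pi_i\pi_j\pi_k$ of $132$. The maximum $n$ can only serve as the largest (middle-position) symbol of such a pattern, and then its two outer symbols lie on opposite sides of $p$ with $\pi_i<\pi_k$, contradicting the domination of the left block over the right; any occurrence that avoids $p$ but still straddles the two blocks again forces a left entry below a right entry and is impossible; and an occurrence confined to a single block would contradict that $\alpha$ or $\beta$ avoids $132$. Thus $\pi$ avoids $132$ iff both blocks do, and the representation is unique because $p$ is the unique position of the maximum.

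For the third item I would either rerun the argument with a different distinguished element or invoke a symmetry. The direct route uses the \emph{value} $v=\pi_n$ of the last entry: if some entry below $v$ preceded some entry above $v$, say $\pi_i<v<\pi_j$ with $i<j<n$, then $\pi_i\,\pi_j\,\pi_n$ is a $132$, so $132$-avoidance forces all entries exceeding $v$ to precede all entries below $v$, which is exactly $\pi=\alpha\ominus(\beta\oplus1)$ with $v=|\beta|+1$; uniqueness is clear since $v=\pi_n$ is determined. The slicker route notes that inversion preserves $132$-avoidance (as $132$ is its own inverse) and that, from $(\gamma\ominus\delta)^{-1}=\delta^{-1}\ominus\gamma^{-1}$ and $(\gamma\oplus\delta)^{-1}=\gamma^{-1}\oplus\delta^{-1}$, one gets $((\alpha\oplus1)\ominus\beta)^{-1}=\beta^{-1}\ominus(\alpha^{-1}\oplus1)$; hence the inverse map carries the first normal form bijectively onto the third, and the equivalence of items one and three follows from that of items one and two.

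I expect the only delicate point to be the converse ``straddle-free'' analysis: one must verify exhaustively that in any candidate $132$ pattern at least two symbols fall in the same block, that the distinguished maximum (resp. last value) can play only its one prescribed role, and that the value-domination between the blocks rules out every mixed case. Everything else—standardization, uniqueness from the uniqueness of the maximum's position (resp. of the last value), and the inheritance of $132$-avoidance by subsequences—is routine, so in particular no induction on $n$ is actually needed.
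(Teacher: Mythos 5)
Your proof is correct and complete. Note that the paper does not actually prove this Fact: it is declared folklore and merely illustrated by Figure 1, so your argument supplies exactly what the paper omits. Your two splitting arguments are the intended content of that figure: the occurrence $\pi_i\,\pi_p\,\pi_j$ (with $\pi_p=n$) shows that $132$-avoidance forces every entry left of the maximum to exceed every entry right of it, which is the form $(\alpha\oplus 1)\ominus\beta$, and the occurrence $\pi_i\,\pi_j\,\pi_n$ does the same job for the form $\alpha\ominus(\beta\oplus 1)$. Your converse case analysis is exhaustive: in the pattern $132$ the first letter is smaller than both later ones, so an occurrence cannot have its first letter in the dominating left block and a later letter in the right block, and the maximum can only play the role of the pattern's largest letter, which again forces a forbidden straddle; occurrences inside a block contradict the hypothesis on $\alpha$ or $\beta$. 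The slicker route to the third item via inversion is also sound, since $132$ is an involution (so $\pi\mapsto\pi^{-1}$ preserves $S_n(132)$) and $((\alpha\oplus 1)\ominus\beta)^{-1}=\beta^{-1}\ominus(\alpha^{-1}\oplus 1)$; pleasantly, this is the same symmetry the paper itself invokes later (Section 3.4 and Figure \ref{cons_inv}, where $\pi=\alpha\ominus(\beta\oplus 1)$ gives $\pi^{-1}=(\beta^{-1}\oplus 1)\ominus\alpha^{-1}$), so your argument meshes well with the rest of the paper. You are also right that uniqueness is immediate (the split point is the position of the maximum, respectively the value of the last entry) and that no induction on $n$ is needed.
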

\noindent
See Figure \ref{Deux_tr}.

\begin{figure}
\begin{center}
\begin{tabular}{ccc}
   \unitlength=2mm
\begin{picture}(9,9)
\put(0,0){\line(1,0){8}}
\put(0,0){\line(0,1){8}}
\put(0,8){\line(1,0){8}}
\put(8,0){\line(0,1){8}}
%
\put(3.28,7.51){\circle*{0.7}} 
\put(0,7){\line(1,0){2.65}}
\put(0,4.45){\line(1,0){2.65}}
\put(2.7,4.42){\line(0,1){2.61}}
\put(4.1,0){\line(0,1){3.95}}
\put(4.1,3.9){\line(1,0){3.95}}
\put(3,8.7){$\scriptstyle n$}
\put(0.9,5.4){$\scriptstyle \alpha$}
\put(5.4,1.4){$\scriptstyle \beta$}
\end{picture} 
   \unitlength=2mm
\begin{picture}(9,9)
\put(0,0){\line(1,0){8.}}
\put(0,0){\line(0,1){8.}}
\put(0,8){\line(1,0){8.0}}
\put(8,0){\line(0,1){8.0}}
%
\put(7.51,3.28){\circle*{0.7}} 
\put(6.9,0){\line(0,1){2.65}}
\put(4.35,0){\line(0,1){2.65}}
\put(4.3,2.65){\line(1,0){2.65}}
\put(4.,4.17){\line(0,1){3.8}}
\put(0,4.2){\line(1,0){3.95}}
\put(8.5,3){$\scriptstyle \pi_n$}
\put(1.5,5.7){$\scriptstyle \alpha$}
\put(5.2,0.9){$\scriptstyle \beta$}

\end{picture} 
(1)  & & (2)  \\
\end{tabular}
\caption{\label{Deux_tr} The decomposition: (1)
$\pi=(\alpha\oplus 1)\ominus \beta$, 
and (2)  $\pi=\alpha\ominus(\beta\oplus 1)$ of $\pi\in S_n(132)$, $n\geq 1$.
}
\end{center}
\end{figure}
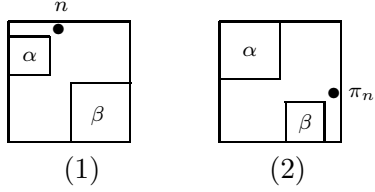

\subsection*{Permutation symmetries}

For a permutation $\pi\in S_n$, the {\it reverse} and 
{\it complement} of $\pi$, denoted $\pi^r$ and $\pi^c$ respectively, 
are defined as:
\begin{itemize}
\item $\pi^r_i=\pi_{n-i+1}$,
\item $\pi^c_i=n-\pi_{i}+1$,
\end{itemize}
and both operations can naturally be extended to vincular patters; for
instance, the reverse of 
$\bcSa$ is $\aScb$, and the complement of 
$\bcSa$ is $\baSc$. These operations preserve pattern containment, in the sense that, 
if the (vincular) patter $\sigma$ is contained in the permutation $\pi$, then
$\sigma^r$ is contained in $\pi^r$, and $\sigma^c$ is contained in $\pi^c$.

The {\it inverse} of $\pi$, denoted $\pi^{-1}$, is defined as: 
\begin{itemize}
\item $\pi^{-1}_{\pi_i}=i$,
\end{itemize}
but, unlike the reverse and complement, it can not be extended to vincular patters:
in general, the inverse of a vincular pattern is a {\it bivincular pattern}
(see for example \cite[p. 13]{Kit} for its formal definition), we will not
consider here. 

\section{Main results}
\label{main_results}

\noindent
{\bf 3.1 Equidistribution of 
         $(\bSca,\bSac,\rlM,\rlm)$ and $(\bSac,\bSca,\rlm,\rlM)$ on 
         $S_n(132)$: bijection~$\phi$
}

\medskip


\noindent
We define a mapping $\phi$ on $S_n(132)$ and we will see that it is an involution, 
that is, a bijection from $S_n(132)$ into itself, which is its own inverse;
and Theorem \ref{The_1} below shows the desired equidistribution.

\medskip

The mapping $\phi$ is recursively defined as:
if $\pi$ is the empty permutation (that is, $n=0$), then $\phi(\pi)=\pi$; 
and if $\pi\in S_n(132)$, $n\geq 1$, with $\pi=\alpha \ominus (\beta \oplus 1)$
for some $132$-voiding permutations $\alpha$ and $\beta$,
then  
$$\phi(\pi)=\phi(\beta)\ominus (\phi(\alpha)\oplus 1).
$$
See Figure \ref{cons_phi} for this definition.

Note that $\phi$ is in some sense similar with the inversion $^{-1}$,
but is fundamentally different from it. Indeed, the inversion
when restricted to $S_n(132)$ satisfies: 
$\left(\alpha \ominus (\beta \oplus 1)\right)^{-1}=
(\beta^{-1}\oplus 1) \ominus \alpha^{-1}$, see Figure \ref{cons_inv}.       

\begin{figure}[h]
\begin{center}
\begin{tabular}{c}
\unitlength=4mm
\begin{picture}(0.3,5)
\put(-0.7,2.3){$\scriptstyle\pi=$}
\end{picture}
\unitlength=1.2mm
\begin{picture}(19,19)
\put(0,0){\line(1,0){19}}
\put(0,0){\line(0,1){19}}
\put(19,0){\line(0,1){19}}
\put(0,19){\line(1,0){19}}

\put(0,13){\line(1,0){6}}
\put(6,13){\line(0,1){6}}

\put(6,0){\line(0,1){11}}
\put(6,11){\line(1,0){11}}
\put(17,0){\line(0,1){11}}

\put(2.2,15.2){$\scriptstyle\alpha$}
\put(10.4,5.){$\scriptstyle\beta$}
\put(18,12){\circle*{1.2}} 
\end{picture} 
\unitlength=4mm
\begin{picture}(1,5)
\put(0.,2.3){$\rightarrow$}
\end{picture}
\unitlength=1.2mm
\begin{picture}(19,19)
\put(0,0){\line(1,0){19}}
\put(0,0){\line(0,1){19}}
\put(19,0){\line(0,1){19}}
\put(0,19){\line(1,0){19}}

\put(0,8){\line(1,0){11}}
\put(11,8){\line(0,1){11}}

\put(11,0){\line(0,1){6}}
\put(17,0){\line(0,1){6}}
\put(11,6){\line(1,0){6}}

\put(2.7,12.8){$\scriptstyle\phi(\beta)$}
\put(11.5,2.7){$\scriptstyle\phi(\alpha)$}
\put(18,7){\circle*{1.2}} 
\end{picture} 
\unitlength=4mm
\begin{picture}(0.3,5)
\put(0,2.3){$\scriptstyle=\phi(\pi)$}
\end{picture}
\end{tabular}
\end{center}
\caption{
\label{cons_phi}
The recursive definition of $\phi(\pi)$.}
\end{figure}
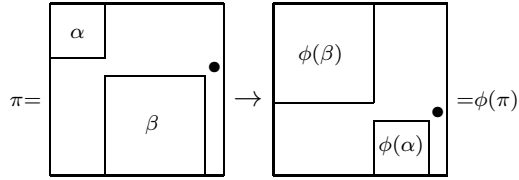

By induction on $n$, from the definition of $\phi$ it follows
that if $\pi\in S_n(132)$, then $\phi(\pi)\in S_n(132)$ and $\phi(\phi(\pi))=\pi$, 
and in particular $\phi$ is a bijection on $S_n(132)$ to itself.

In the proof of the next theorem we will need the following result.

\begin{Pro}
\label{one_pro}
For $\pi\in S_n(132)$ we have 
$(\abF)\, \pi=(\baF)\, \phi(\pi)$ and
$(\baF)\, \pi=(\abF)\, \phi(\pi)$.
\end{Pro}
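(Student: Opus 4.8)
The plan is to reduce both identities to a single statement about the last entry of $\phi(\pi)$. First I would recall the observation made just after the definition of the statistics: an occurrence of $\abF$ is a position $i<n$ with $\pi_i<\pi_n$, and since exactly the values $1,2,\ldots,\pi_n-1$ lie below $\pi_n$ and all of them necessarily sit to the left of the final entry, one has $(\abF)\,\pi=\pi_n-1$. By the mirror‑image argument, an occurrence of $\baF$ is a position $i<n$ with $\pi_i>\pi_n$, so $(\baF)\,\pi=n-\pi_n$. Consequently both claimed equalities are equivalent to the single relation $\phi(\pi)_n=n-\pi_n+1$ (for the length‑$n$ permutation $\phi(\pi)$), and it suffices to prove this.

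I would then read off the two relevant last values directly from the top‑level decompositions, so that no induction is needed. Writing $\pi=\alpha\ominus(\beta\oplus 1)$ as in the definition of $\phi$ and setting $a=|\alpha|$, $b=|\beta|$ (whence $n=a+b+1$), the final entry of $\pi$ is the final entry of $\beta\oplus 1$, namely $|\beta|+1$, so $\pi_n=b+1$. Applying the defining formula $\phi(\pi)=\phi(\beta)\ominus(\phi(\alpha)\oplus 1)$ and using that $\phi$ preserves length, so that $|\phi(\alpha)|=a$, the final entry of $\phi(\pi)$ is the final entry of $\phi(\alpha)\oplus 1$, namely $a+1$; hence $\phi(\pi)_n=a+1$.

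Finally I would combine the two computations: from $a+b+1=n$ we get $a+1=n-b=n-(\pi_n-1)=n-\pi_n+1$, which is precisely the relation isolated in the first paragraph, proving both equalities at once. I do not anticipate a genuine obstacle here, since the argument only unpacks the outermost layer of the recursion rather than the full recursive unfolding. The one point requiring care is the bookkeeping of the $\ominus$ and $\oplus$ operations when locating the last entry on each side, together with the fact (already established from the definition of $\phi$) that $\phi$ maps $S_n(132)$ to itself and in particular preserves length, which is what licenses $|\phi(\alpha)|=a$.
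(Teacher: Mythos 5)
Your proof is correct and takes essentially the same approach as the paper: both arguments simply unpack one level of the decompositions $\pi=\alpha\ominus(\beta\oplus 1)$ and $\phi(\pi)=\phi(\beta)\ominus(\phi(\alpha)\oplus 1)$ and read the two hooked-pattern statistics off the last entry, with no induction. Your intermediate reformulation via the single identity $\phi(\pi)_n=n-\pi_n+1$ is just a repackaging of the paper's direct count $(\baF)\,\phi(\pi)=|\phi(\beta)|=|\beta|=(\abF)\,\pi$ and $(\abF)\,\phi(\pi)=|\phi(\alpha)|=|\alpha|=(\baF)\,\pi$.
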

\begin{proof}
If $\pi=\alpha \ominus (\beta \oplus 1)$ is a non-empty $132$-avoiding permutation, 
then we have 
\begin{itemize}
\item $(\baF)\, \phi(\pi)=|\phi(\beta)|=|\beta|=(\abF)\, \pi$; and 
\item $(\abF)\, \phi(\pi)=|\phi(\alpha)|=|\alpha|=(\baF)\, \pi$.
\end{itemize}
\end{proof}

\begin{The}
\label{The_1}
If $\pi\in S_n(132)$, then
$$
(\bSac,\bSca,\rlm,\rlM)\,\phi(\pi)=(\bSca,\bSac,\rlM,\rlm)\,\pi.
$$
\end{The}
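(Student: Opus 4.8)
The plan is to prove the four coordinate identities $(\bSac)\,\phi(\pi)=(\bSca)\,\pi$, $(\bSca)\,\phi(\pi)=(\bSac)\,\pi$, $\rlm\,\phi(\pi)=\rlM\,\pi$ and $\rlM\,\phi(\pi)=\rlm\,\pi$ simultaneously, by induction on $n$. For $n=0$ every statistic vanishes, so the base case is trivial. For the inductive step I would write a non-empty $\pi\in S_n(132)$ in the canonical form $\pi=\alpha\ominus(\beta\oplus 1)$ supplied by Fact~\ref{fact_decomposition}, with $\alpha,\beta$ shorter $132$-avoiding permutations, so that by the recursive definition $\phi(\pi)=\phi(\beta)\ominus(\phi(\alpha)\oplus 1)$ has the very same shape, with the roles of $\alpha$ and $\beta$ interchanged.

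The core of the argument is to establish additive decomposition formulas for each statistic under $\pi=\alpha\ominus(\beta\oplus 1)$. Here $\pi$ splits into three value-blocks: a top block carrying $\alpha$ on positions $1,\dots,|\alpha|$ (the $|\alpha|$ largest values), a bottom block carrying $\beta$ (the $|\beta|$ smallest values), and a final letter of value $|\beta|+1$. Since everything to the right of the top block is smaller than it, and the final letter exceeds the whole bottom block, one reads off at once that $\rlM\,\pi=\rlM\,\alpha+1$ and $\rlm\,\pi=\rlm\,\beta+1$. For the vincular statistics I would classify occurrences by the position of their distinguished adjacent pair. For $\bSca$ this pair is a descent, which can only lie inside the top block, inside the bottom block, or be the top/bottom boundary step (the step onto the final letter being an ascent); the first two cases reproduce $(\bSca)\,\alpha$ and $(\bSca)\,\beta$, while in the boundary case the middle letter is forced into the top block and the number of valid choices is exactly the number of letters of $\alpha$ smaller than its last letter, namely $(\abF)\,\alpha$. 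Dually, for $\bSac$ the distinguished pair is an ascent, lying inside the top block, inside the bottom block, or being the step onto the final letter; the last case forces the middle letter into the bottom block and contributes the number of letters of $\beta$ larger than its last letter, i.e. $(\baF)\,\beta$. This yields
\[
(\bSca)\,\pi=(\bSca)\,\alpha+(\bSca)\,\beta+(\abF)\,\alpha,\qquad
(\bSac)\,\pi=(\bSac)\,\alpha+(\bSac)\,\beta+(\baF)\,\beta.
\]

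Finally I would close the induction by substitution. Applying the two displayed formulas to $\phi(\pi)=\phi(\beta)\ominus(\phi(\alpha)\oplus 1)$ (which swaps $\alpha$ and $\beta$), then the inductive hypothesis on the shorter permutations $\alpha$ and $\beta$ (which interchanges $\bSca$ with $\bSac$ and $\rlM$ with $\rlm$), and finally Proposition~\ref{one_pro} (which gives $(\abF)\,\phi(\beta)=(\baF)\,\beta$ and $(\baF)\,\phi(\alpha)=(\abF)\,\alpha$), a one-line computation turns $(\bSca)\,\phi(\pi)$ into $(\bSac)\,\pi$ and $(\bSac)\,\phi(\pi)$ into $(\bSca)\,\pi$, while the $\rlM$/$\rlm$ recurrences give $\rlM\,\phi(\pi)=\rlm\,\pi$ and $\rlm\,\phi(\pi)=\rlM\,\pi$. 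The main obstacle is this middle decomposition step: one must run the case analysis carefully enough to be sure that the middle letter of each occurrence is forced into the correct block, that the two boundary contributions are precisely $(\abF)\,\alpha$ and $(\baF)\,\beta$, and that the formulas stay valid in the degenerate cases where $\alpha$ or $\beta$ is empty.
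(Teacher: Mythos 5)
Your proof is correct and follows essentially the same route as the paper: the same decomposition $\pi=\alpha\ominus(\beta\oplus 1)$, $\phi(\pi)=\phi(\beta)\ominus(\phi(\alpha)\oplus 1)$, the same case analysis of where occurrences sit (yielding the boundary terms $(\abF)\,\alpha$ and $(\baF)\,\beta$), and the same use of Proposition~\ref{one_pro} inside an induction on $n$. The only cosmetic difference is that you prove all four coordinate identities directly within the induction, whereas the paper proves just $(\bSac)\,\phi(\pi)=(\bSca)\,\pi$ and $\rlm\,\phi(\pi)=\rlM\,\pi$ and then deduces the remaining two from the fact that $\phi$ is an involution.
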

\begin{proof} By induction on $n$.
Trivially, the statement holds for $n=0$, and let suppose that it is true for any $n<m$, 
and consider $\pi=\alpha \ominus (\beta \oplus 1)\in S_m(132)$ for
some $132$-avoiding permutations $\alpha$ and $\beta$. 

First we prove that $\rlM\,\pi=\rlm\,\phi(\pi)$:
\begin{equation*}
\begin{array}{rclr}
\rlm\, \phi(\pi) & = & 1+ \rlm\, \phi(\alpha) \\
	         & = & 1+ \rlM\, \alpha   & \mbox{(by the induction hypothesis)}\\
                 & = & \rlM\, \pi. 
\end{array}
\end{equation*}

\medskip

An occurrence of $\bSac$ in $\phi(\pi)$ can be found  either 
in $\phi(\alpha)$, or in $\phi(\beta)$, or has the form $abc$ with 
$ab$ an occurrence of $\baF$ in $\phi(\alpha)$ and $c$ the last symbol of $\phi(\pi)$. 
Thus
\begin{equation*}
\begin{array}{rclr}
(\bSac)\, \phi(\pi) & = & (\bSac)\, \phi(\alpha) + (\bSac)\, \phi(\beta) + (\baF)\, \phi(\alpha) \\
	            & = & (\bSca)\, \alpha + (\bSca)\, \beta + (\abF)\, \alpha  & \mbox{(by the induction hypothesis}\\
	            &   &   & \mbox{ and Proposition \ref{one_pro})}.
\end{array}
\end{equation*}
And an occurrence of  $\bSca$ in $\pi$ can be found either 
in $\alpha$, or in $\beta$, or has the form $abc$ with 
$ab$ an occurrence of $\abF$ in $\alpha$ and $c$ the first symbol of
$\beta$ if it is not empty, and the last symbol of $\pi$ when $\beta$ is empty. 
Thus
\begin{equation*}
\begin{array}{rclr}
(\bSca)\, \pi & = & (\bSca)\, \alpha + (\bSca)\, \beta + (\abF)\, \alpha,  & 
\end{array}
\end{equation*}
and finally $(\bSac)\, \phi(\pi)=(\bSca)\, \pi$.

Moreover, since $\phi$ is an involution, it follows that  
$(\bSca)\, \phi(\pi)=(\bSac)\, \pi$ and $\rlm\, \pi=\rlM\, \phi(\pi)$.
\end{proof}

The three-statistics $(\bSca,\bSac,\des)$ and $(\bSac,\bSca,\des)$  
do not have the same distribution on $S_n(132)$; however, 
Theorem \ref{The_psi} below says that separetely $(\bSca)$ and $(\bSac)$,
together with~$\des$, have the same joint distribution.

\medskip

\noindent
{\bf 3.2 Equidistribution of $(\bSca,\des)$ and 
      $(\bSac,\des)$ on $S_n(132)$: bijection~$\psi$ 
}

\medskip


\noindent
Now, we define the mapping $\psi:S_n(132)\to S_n$ 
by $\psi(1)=1$ if $n=1$, and for $n\geq 2$, 
$\psi(\pi)$ is defined recursively below, according to 
three cases: 
$\pi^{-1}_n=1$, 
$2\leq \pi^{-1}_n\leq n-1$, and 
$\pi^{-1}_n=n$; see Figure \ref{three_cases}.

\medskip

Let $\pi\in S_n(132)$, $n\geq 2$.
\begin{enumerate}
\item If $\pi$ has the form $1\ominus\alpha$ (or equivalently, $\pi^{-1}_n=1$), then
      $\psi(\pi)$ is simply $1\ominus\psi(\alpha)$. 

\item If $\pi$ has the form $(\alpha \oplus 1 ) \ominus \beta$ for some non-empty permutations 
      $\alpha$ and $\beta$ (or equivalently, $2\leq \pi^{-1}_n\leq n-1$), then $\psi(\pi)$ is obtained by:
      \begin{enumerate}
      \item considering the (possibly empty) permutations $\gamma $ and $\delta$ 
                 with $\psi(\beta)=\gamma\ominus(\delta\oplus 1)$, and  
      
      \item defining $\psi(\pi)$ as 
                  $((\psi(\alpha)\ominus (\delta \oplus 1) )\oplus 1)\ominus \gamma$.
      \end{enumerate}
      (Note that $\alpha$, $\beta$, $\gamma$ and $\delta$ are
      $132$-avoiding permutations.)
\item  If $\pi$ has the form $\alpha \oplus 1$ for some non-empty permutation 
      $\alpha$ (or equivalently, $\pi^{-1}_n=n$), then $\psi(\pi)$ is obtained by:    
      \begin{enumerate}
      \item considering the (possibly empty) permutations $\gamma $ and $\delta$ 
                 with $\psi(\alpha)=(\gamma\oplus 1) \ominus \delta$, and
      \item defining $\psi(\pi)$ as $((\gamma\oplus 1)\oplus 1)\ominus \delta$.
      \end{enumerate} 
      (Note that $\alpha$, $\gamma$ and $\delta$ are $132$-avoiding
      permutations.)
\end{enumerate}

\noindent
From the above definition of $\psi$ it is easy to check the following.

\begin{Pro} 
\label{Pro_psi}
Let $\pi\in S_n(132)$ and $\sigma=\psi(\pi)$.
\begin{enumerate}
\item $\pi^{-1}_n=1$ iff $\sigma^{-1}_n=1$,
\item $2\leq \pi^{-1}_n\leq n-1$ iff  $\sigma^{-1}_n>1$ and  $\sigma^{-1}_n-\sigma^{-1}_{n-1}>1$,
\item $\pi^{-1}_n=n$ iff $\sigma^{-1}_n>1$ and  $\sigma^{-1}_n-\sigma^{-1}_{n-1}=1$.
\end{enumerate}
\end{Pro}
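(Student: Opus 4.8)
The plan is to verify the three biconditionals by a direct case analysis following the recursive definition of $\psi$, and then to combine the resulting forward implications into the stated equivalences by a partition argument. First I would prove the three forward (``only if'') implications, one for each of the three cases defining $\psi$. In each case the position $\pi^{-1}_n$ determines the shape of $\pi$ (namely $\pi=1\ominus\alpha$, $\pi=(\alpha\oplus 1)\ominus\beta$, or $\pi=\alpha\oplus 1$), and I would simply read off from the explicit formula for $\psi(\pi)$ the positions at which the two largest values $n$ and $n-1$ land in $\sigma=\psi(\pi)$, using only the definitions of $\oplus$ and $\ominus$ to track positions.

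Concretely: when $\pi^{-1}_n=1$ we have $\sigma=1\ominus\psi(\alpha)$, so $\sigma_1=n$ and $\sigma^{-1}_n=1$. When $2\le\pi^{-1}_n\le n-1$, writing $\sigma=((\psi(\alpha)\ominus(\delta\oplus 1))\oplus 1)\ominus\gamma$, the value $n$ is the top ``$\oplus 1$'' symbol, sitting at position $|\alpha|+|\delta|+2$, while $n-1$ is the maximum of the inner block $\psi(\alpha)\ominus(\delta\oplus 1)$, which is the maximum of $\psi(\alpha)$ and hence occupies a position at most $|\alpha|$; thus $\sigma^{-1}_n>1$ and $\sigma^{-1}_n-\sigma^{-1}_{n-1}\ge|\delta|+2>1$. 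When $\pi^{-1}_n=n$, writing $\sigma=((\gamma\oplus 1)\oplus 1)\ominus\delta$, the two outer ``$\oplus 1$'' symbols carry the values $n$ and $n-1$ at the consecutive positions $|\gamma|+2$ and $|\gamma|+1$, so $\sigma^{-1}_n>1$ and $\sigma^{-1}_n-\sigma^{-1}_{n-1}=1$. These computations rely on Fact~\ref{fact_decomposition} to guarantee that the decompositions $\psi(\beta)=\gamma\ominus(\delta\oplus 1)$ and $\psi(\alpha)=(\gamma\oplus 1)\ominus\delta$ invoked in the definition of $\psi$ exist and are well defined.

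To upgrade these forward implications to the stated equivalences, I would observe that the three hypotheses on $\pi$ partition $S_n(132)$ (for $n\ge 2$ the value $\pi^{-1}_n$ lies in exactly one of $\{1\}$, $\{2,\dots,n-1\}$, $\{n\}$), and that the three conclusions on $\sigma$ are pairwise mutually exclusive (the first is singled out by $\sigma^{-1}_n=1$, while the second and third are separated by whether $\sigma^{-1}_n-\sigma^{-1}_{n-1}$ exceeds or equals $1$). Given these two facts, each converse follows formally: if, for instance, $\sigma^{-1}_n=1$, then $\pi$ cannot fall into either of the other two cases, since both force $\sigma^{-1}_n>1$, whence $\pi^{-1}_n=1$; the other two converses are obtained the same way. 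I expect the only delicate point to be the position bookkeeping in the case $2\le\pi^{-1}_n\le n-1$, where one must correctly identify $n-1$ as the maximum of $\psi(\alpha)$ (rather than a symbol of $\delta$ or $\gamma$) and confirm that it precedes $n$ by more than one position; the remaining two cases and the partition argument are then routine.
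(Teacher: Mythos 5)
Your proof is correct: the position bookkeeping in each of the three cases is accurate (in particular, identifying $n-1$ as the maximum of $\psi(\alpha)$ in case 2 and noting the difference is at least $|\delta|+2$), and the partition-plus-mutual-exclusivity argument validly upgrades the forward implications to biconditionals. The paper offers no explicit proof, stating only that the proposition is ``easy to check'' from the definition of $\psi$; your argument is exactly the intended verification, so it matches the paper's approach.
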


\begin{figure}
\begin{center}
\begin{tabular}{c}
\begin{tabular}{c}
\unitlength=4mm
\begin{picture}(0.3,5)
\put(-0.7,2.3){$\scriptstyle\pi=$}
\end{picture}
\unitlength=1.2mm
\begin{picture}(19,19)
\put(0,0){\line(1,0){19}}
\put(0,0){\line(0,1){19}}
\put(19,0){\line(0,1){19}}
\put(0,19){\line(1,0){19}}
\put(2,17){\line(1,0){17}}
\put(2,0){\line(0,1){17}}
\put(1,18){\circle*{1.2}}
\put(9.2,8.5){$\scriptstyle\alpha$}
\end{picture} 

\unitlength=4mm
\begin{picture}(1,5)
\put(0.,2.3){$\rightarrow$}
\end{picture}
\unitlength=1.2mm
\begin{picture}(19,19)
\put(0,0){\line(1,0){19}}
\put(0,0){\line(0,1){19}}
\put(19,0){\line(0,1){19}}
\put(0,19){\line(1,0){19}}
\put(2,17){\line(1,0){17}}
\put(2,0){\line(0,1){17}}
\put(1,18){\circle*{1.2}}
\put(8.2,8.5){$\scriptstyle\psi(\alpha)$}
\end{picture} 

\unitlength=4mm
\begin{picture}(0.3,5)
\put(0,2.3){$\scriptstyle=\psi(\pi)$}
\end{picture}
\end{tabular}\\
(1)\\
\\

\begin{tabular}{c}
\unitlength=4mm
\begin{picture}(0.3,5)
\put(-0.7,2.3){$\scriptstyle\pi=$}
\end{picture}
\unitlength=1.2mm
\begin{picture}(19,19)
\put(0,0){\line(1,0){19}}
\put(0,0){\line(0,1){19}}
\put(19,0){\line(0,1){19}}
\put(0,19){\line(1,0){19}}

\put(0,11){\line(1,0){6}}
\put(0,17){\line(1,0){6}}
\put(6,11){\line(0,1){6}}

\put(8,0){\line(0,1){11}}
\put(8,11){\line(1,0){11}}

\put(2.2,13.5){$\scriptstyle\alpha$}
\put(12.6,5.2){$\scriptstyle\beta$}
\put(7,18){\circle*{1.2}} 
\end{picture} 
\unitlength=4mm
\begin{picture}(1,5)
\put(0.,2.3){$\rightarrow$}
\end{picture}
\unitlength=1.2mm
\begin{picture}(19,19)
\put(0,0){\line(1,0){19}}
\put(0,0){\line(0,1){19}}
\put(19,0){\line(0,1){19}}
\put(0,19){\line(1,0){19}}

\put(0,11){\line(1,0){6}}
\put(0,17){\line(1,0){6}}
\put(6,11){\line(0,1){6}}

\put(8,0){\line(0,1){11}}
\put(8,11){\line(1,0){11}}

\put(0.6,13.5){$\scriptstyle\psi(\alpha)$}
\put(11.5,5.2){$\scriptstyle\psi(\beta)$}
\put(7,18){\circle*{1.2}} 
\end{picture} 
\unitlength=4mm
\begin{picture}(0.7,5)
\put(0.0,2.3){$\scriptstyle=$}
\end{picture}
\unitlength=1.2mm
\begin{picture}(19,19)
\put(0,0){\line(1,0){19}}
\put(0,0){\line(0,1){19}}
\put(19,0){\line(0,1){19}}
\put(0,19){\line(1,0){19}}

\put(0,11){\line(1,0){6}}
\put(0,17){\line(1,0){6}}
\put(6,11){\line(0,1){6}}

\put(8,7){\line(1,0){4}}
\put(8,7){\line(0,1){4}}
\put(8,11){\line(1,0){4}}
\put(12,7){\line(0,1){4}}

\put(12,0){\line(0,1){5}}
\put(17,0){\line(0,1){5}}
\put(12,5){\line(1,0){5}}
\put(0.6,13.5){$\scriptstyle\psi(\alpha)$}
\put(9.3,8.7){$\scriptstyle\gamma$}
\put(14,1.8){$\scriptstyle\delta$}
\put(7,18){\circle*{1.2}} 
\put(18,6){\circle*{1.2}} 
\end{picture} 
\unitlength=4mm
\begin{picture}(1,5)
\put(0.,2.3){$\rightarrow$}
\end{picture}
\unitlength=1.2mm
\begin{picture}(19,19)
\put(0,0){\line(1,0){19}}
\put(0,0){\line(0,1){19}}
\put(19,0){\line(0,1){19}}
\put(0,19){\line(1,0){19}}

\put(0,11){\line(1,0){6}}
\put(0,17){\line(1,0){6}}
\put(6,11){\line(0,1){6}}

\put(6,4){\line(0,1){5}}
\put(6,4){\line(1,0){5}}
\put(6,9){\line(1,0){5}}
\put(11,4){\line(0,1){5}}

\put(15,0){\line(0,1){4}}
\put(15,4){\line(1,0){4}}
\put(0.6,13.5){$\scriptstyle\psi(\alpha)$}

\put(8,5.8){$\scriptstyle\delta$}
\put(16.4,1.7){$\scriptstyle\gamma$}
\put(12,10){\circle*{1.2}} 
\put(14,18){\circle*{1.2}} 
\end{picture} 
\unitlength=4mm
\begin{picture}(0.3,5)
\put(0,2.3){$\scriptstyle=\psi(\pi)$}
\end{picture}
\end{tabular}
\\
(2)\\
\\
\begin{tabular}{c}
\unitlength=4mm
\begin{picture}(0.3,5)
\put(-0.7,2.3){$\scriptstyle\pi=$}
\end{picture}
\unitlength=1.2mm
\begin{picture}(19,19)
\put(0,0){\line(1,0){19}}
\put(0,0){\line(0,1){19}}
\put(19,0){\line(0,1){19}}
\put(0,19){\line(1,0){19}}
\put(0,17){\line(1,0){17}}
\put(17,0){\line(0,1){17}}
\put(18,18){\circle*{1.2}}
\put(8.2,8.5){$\scriptstyle\alpha$}
\end{picture} 

\unitlength=4mm
\begin{picture}(1,5)
\put(0.,2.3){$\rightarrow$}
\end{picture}

\unitlength=1.2mm
\begin{picture}(19,19)
\put(0,0){\line(1,0){19}}
\put(0,0){\line(0,1){19}}
\put(19,0){\line(0,1){19}}
\put(0,19){\line(1,0){19}}
\put(0,17){\line(1,0){17}}
\put(17,0){\line(0,1){17}}
\put(18,18){\circle*{1.2}}
\put(7.0,8.5){$\scriptstyle\psi(\alpha)$}
\end{picture} 

\unitlength=4mm
\begin{picture}(0.7,5)
\put(0.0,2.3){$\scriptstyle=$}
\end{picture}

\unitlength=1.2mm
\begin{picture}(19,19)
\put(0,0){\line(1,0){19}}
\put(0,0){\line(0,1){19}}
\put(19,0){\line(0,1){19}}
\put(0,19){\line(1,0){19}}

\put(0,6){\line(1,0){9}}
\put(0,15){\line(1,0){9}}
\put(9,6){\line(0,1){9}}

\put(11,0){\line(0,1){6}}
\put(11,6){\line(1,0){6}}
\put(17,0){\line(0,1){6}}
\put(10,16){\circle*{1.2}}
\put(18,18){\circle*{1.2}}
\put(3.5,9.3){$\scriptstyle\gamma$}
\put(13.5,2.2){$\scriptstyle\delta$}
\end{picture} 

\unitlength=4mm
\begin{picture}(1,5)
\put(0.,2.3){$\rightarrow$}
\end{picture}

\unitlength=1.2mm
\begin{picture}(19,19)
\put(0,0){\line(1,0){19}}
\put(0,0){\line(0,1){19}}
\put(19,0){\line(0,1){19}}
\put(0,19){\line(1,0){19}}

\put(0,6){\line(1,0){9}}
\put(0,15){\line(1,0){9}}
\put(9,6){\line(0,1){9}}

\put(13,0){\line(0,1){6}}
\put(13,6){\line(1,0){6}}
\put(10,16){\circle*{1.2}}
\put(12,18){\circle*{1.2}}
\put(3.5,9.3){$\scriptstyle\gamma$}
\put(15.5,2.2){$\scriptstyle\delta$}
\end{picture} 
\unitlength=4mm
\begin{picture}(0.3,5)
\put(0,2.3){$\scriptstyle=\psi(\pi)$}
\end{picture}
\end{tabular}
\\
(3)\\
\end{tabular}
\end{center}
\caption{
\label{three_cases}
The three cases occurring in the definition of $\psi$: 
(1) $\pi^{-1}_n=1$, (2) $2\leq \pi^{-1}_n\leq n-1$, and (3) $\pi^{-1}_n=n$.}
\end{figure}
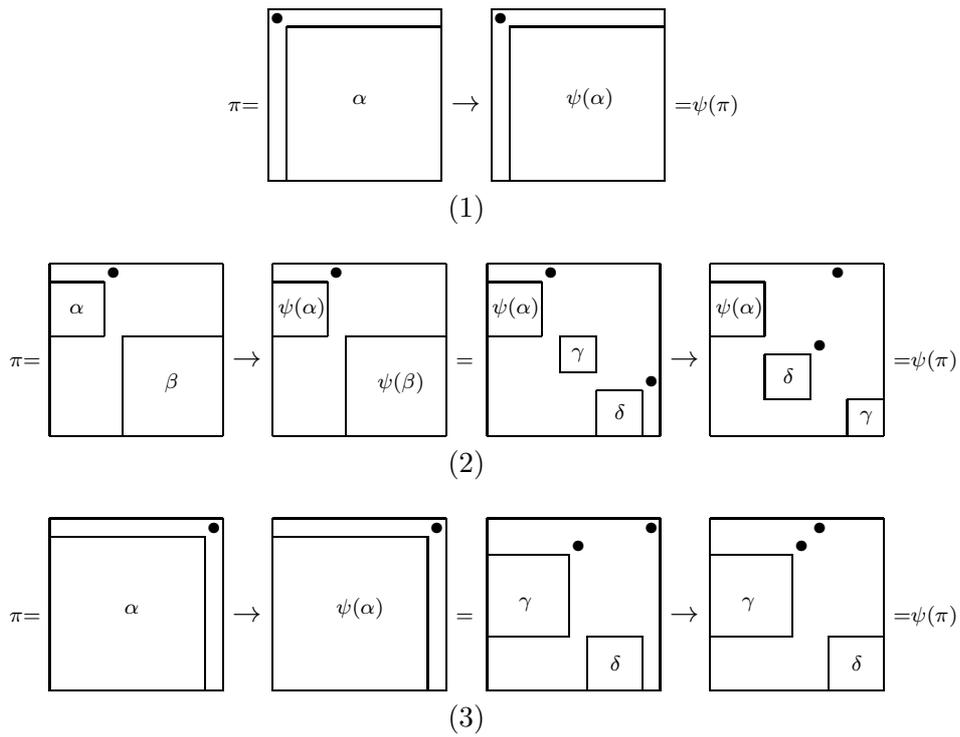
    
\begin{The}
The mapping $\psi$ is a bijection on $S_n(132)$, and for any $\pi\in S_n(132)$,
we have
$$
(\bSac,\des)\,\psi(\pi)=(\bSca,\des)\,\pi.
$$
\label{The_psi}
\end{The}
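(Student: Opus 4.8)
The plan is to push everything through the behaviour of the two statistics under the operations $\oplus$ and $\ominus$, and then to run a single induction on $n$ along the three cases defining $\psi$. First I would record the block rules, proved exactly as the corresponding identities buried in the proof of Theorem~\ref{The_1}: for all permutations $A,B$,
\begin{align*}
(\bSca)(A\ominus B) &= (\bSca)A+(\bSca)B+(\abF)A\cdot[B\neq\emptyset], & (\bSca)(A\oplus B) &= (\bSca)A+(\bSca)B,\\
(\bSac)(A\oplus B) &= (\bSac)A+(\bSac)B+(\baF)A\cdot[B\neq\emptyset], & (\bSac)(A\ominus B) &= (\bSac)A+(\bSac)B.
\end{align*}
The correction terms come from the single \emph{straddling} occurrence: a $\bSca$ whose ``$31$'' lands across the cut of $A\ominus B$ forces its ``$2$'' to be an $\abF$ of $A$, and symmetrically a $\bSac$ across $A\oplus B$ uses a $\baF$ of $A$; recall $(\abF)A=A_{|A|}-1$ and $(\baF)A=|A|-A_{|A|}$. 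The only specializations I will actually use are that appending a maximum fixes $\bSca$ while $(\bSac)(A\oplus1)=(\bSac)A+(\baF)A$, and that prepending a maximum (a skew sum $1\ominus A$) fixes both statistics (since $(\abF)(1)=0$).

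For the bijection, each of the three defining formulas builds $\psi(\pi)$ from $132$-avoiding blocks using only skew sums and appends of a single maximum, both of which preserve $132$-avoidance (a skew sum keeps every $132$ inside one block, and a maximum appended at the end is too large and too late to play any role in a $132$); so by induction $\psi(\pi)\in S_n(132)$. As $S_n(132)$ is finite it suffices to prove injectivity. Given $\sigma=\psi(\pi)$, Proposition~\ref{Pro_psi} reads off from $\sigma^{-1}_n$ and $\sigma^{-1}_n-\sigma^{-1}_{n-1}$ which case produced $\sigma$; these three conditions are mutually exclusive and exhaustive because $132$-avoidance forces $\sigma^{-1}_{n-1}<\sigma^{-1}_n$ whenever $\sigma^{-1}_n>1$. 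In each case the position of $n$ fixes the top skew cut, and the uniqueness of the sum-decomposition (Fact~\ref{fact_decomposition}) recovers $\gamma,\delta$ and hence $\psi(\alpha),\psi(\beta)$, to which the inductive inverse applies; injectivity follows.

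I then prove $(\bSac,\des)\,\psi(\pi)=(\bSca,\des)\,\pi$ by induction on $n$. The descent coordinate follows in every case from Fact~\ref{fact_des} together with $\des\psi(\alpha)=\des\alpha$ and $\des\psi(\beta)=\des\beta$: one checks $\des\psi(\pi)$ obeys the very recursion governing $\des\pi$ (e.g.\ in Case~2 both equal $\des\alpha+\des\beta+1$), with the only care being that $\gamma$ or $\delta$ may be empty. For the pattern coordinate, Case~1 ($\psi(\pi)=1\ominus\psi(\alpha)$) is immediate, since the rules give $(\bSca)\pi=(\bSca)\alpha$ and $(\bSac)\psi(\pi)=(\bSac)\psi(\alpha)$. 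In Case~3, writing $\psi(\alpha)=(\gamma\oplus1)\ominus\delta$, both $(\bSca)\pi=(\bSca)\alpha=(\bSac)\psi(\alpha)$ (last step by induction) and $(\bSac)\psi(\pi)$ expand to $(\bSac)(\gamma\oplus1)+(\bSac)\delta$, because the extra maximum in $((\gamma\oplus1)\oplus1)$ contributes $(\baF)(\gamma\oplus1)=0$.

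The real work is Case~2: $\pi=(\alpha\oplus1)\ominus\beta$ with $\psi(\beta)=\gamma\ominus(\delta\oplus1)$ and $\psi(\pi)=((\psi(\alpha)\ominus(\delta\oplus1))\oplus1)\ominus\gamma$. On the source side, $(\bSca)\pi=(\bSca)\alpha+(\bSca)\beta+(\abF)(\alpha\oplus1)=(\bSca)\alpha+(\bSca)\beta+|\alpha|$. On the image side, with $R=\psi(\alpha)\ominus(\delta\oplus1)$, the key computation is that the last value of $R$ equals $|\delta|+1$, whence $(\baF)R=|R|-(|\delta|+1)=|\alpha|$, and then
\begin{align*}
(\bSac)\psi(\pi) &= (\bSac)(R\oplus1)+(\bSac)\gamma = (\bSac)R+|\alpha|+(\bSac)\gamma\\
&= (\bSac)\psi(\alpha)+\bigl[(\bSac)\gamma+(\bSac)(\delta\oplus1)\bigr]+|\alpha| = (\bSac)\psi(\alpha)+(\bSac)\psi(\beta)+|\alpha|,
\end{align*}
where the bracket reassembles into $(\bSac)(\gamma\ominus(\delta\oplus1))=(\bSac)\psi(\beta)$ by the $\ominus$-rule. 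The induction hypothesis turns this into $(\bSca)\alpha+(\bSca)\beta+|\alpha|$, matching the source side. I expect the main obstacle to be exactly this bookkeeping of the straddling corrections: recognizing that $(\abF)(\alpha\oplus1)$ and $(\baF)R$ both equal $|\alpha|$ so that the ``extra'' terms cancel, and that the $\gamma$- and $\delta$-pieces regroup into $(\bSac)\psi(\beta)$ — together with tracking empty blocks when invoking Fact~\ref{fact_des}.
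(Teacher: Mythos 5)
Your proposal is correct and takes essentially the same route as the paper: an induction on $n$ through the three defining cases of $\psi$, with bijectivity deduced from Proposition~\ref{Pro_psi} plus unique decomposability, descent preservation from Fact~\ref{fact_des}, and the case-2 pattern count reduced to $(\bSac)\,\psi(\pi)=(\bSac)\,\psi(\alpha)+(\bSac)\,\psi(\beta)+|\alpha|$, exactly as in the paper's computation. The only difference is presentational: you isolate the $\oplus$/$\ominus$ splitting rules (with their straddling correction terms $(\abF)$ and $(\baF)$) as explicit preliminary identities, where the paper applies them implicitly in place.
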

\begin{proof}
Let $\pi\in S_n(132)$. 
By the construction of $\psi$ and iteratively applying Fact~\ref{fact_decomposition} we have that 
$\psi(\pi)\in S_n(132)$.
And by induction on $n$, from Proposition \ref{Pro_psi} 
it follows that $\psi$ is injective and thus bijective; and
from Fact  \ref{fact_des} it follows that $\des\,\pi=\des\,\sigma$.

Now we show by induction on $n$ that 
$(\bSac)\,\psi(\pi)=(\bSca)\,\pi$, for any $\pi\in S_n(132)$, $n\geq 1$.

Clearly, for $n=1$, 
$(\bSac)\,\psi(\pi)=(\bSca)\,\pi$, and
let suppose that $(\bSac)\,\psi(\pi)=(\bSca)\,\pi$ for any $\pi\in S_n$ and 
$n<m$, and we will prove it for $\pi\in S_m$.      
\begin{enumerate}
\item If $\pi^{-1}_m=1$, then $\pi=1\ominus\alpha$ for some $\alpha\in S_{m-1}(132)$,
      and by definition, $\psi(\pi)=1\ominus\psi(\alpha)$. By the induction hypothesis
      we have 
      $(\bSac)\,\psi(\alpha)=(\bSca)\,\alpha$,
      and thus 
      $(\bSac)\,\psi(\pi)=(\bSac)\,\psi(\alpha)=
      (\bSca)\,\alpha=
      (\bSca)\,\pi$.             
\item If $1\leq \pi^{-1}_m\leq m-1$, let $\alpha$, $\beta$, $\gamma$ and $\delta$ 
      be the permutations appearing in the second case of the definition of $\psi$, and we have 
\begin{equation*}
\begin{array}{rclr}
(\bSac)\,\psi(\pi)
  & = & (\bSac)\, \psi(\alpha)+|\alpha|+
        (\bSac)\, \delta\oplus 1+
        (\bSac)\,\gamma \\
  & = & (\bSac)\, \psi(\alpha)+|\alpha|+
        (\bSac)\, \psi(\beta)  \\
  & = & (\bSca)\, \alpha+|\alpha|+
        (\bSca)\, \beta & \mbox{(by the induction hypothesis)}\\
  & = &	(\bSca)\, \pi.     
\end{array}
\end{equation*}    
      
\item If $\pi^{-1}_m=m$, let $\alpha$, $\gamma$ and $\delta$ 
      be the permutations appearing in the third case of the definition of $\psi$.
      Again,  
      $(\bSac)\,\psi(\alpha)=(\bSca)\,\alpha$, and
      
\begin{equation*}
\begin{array}{rclr}
(\bSac)\,\psi(\pi) 
  & = & (\bSac)\, \gamma\oplus1+
        (\bSac)\, \delta\\
  & = & (\bSac)\, \psi(\alpha)\\
  & = &	(\bSca)\, \alpha\\	
  & = &	(\bSca)\, \pi.     
\end{array}
\end{equation*}

\end{enumerate}
\end{proof}

\medskip

\noindent
{\bf 3.3 Equidistribution of $(\baSc,\des,\abF)$ and 
$(\bSac,\des,\abF)$ on $S_n(132)$: bijection~$\mu$
}


\medskip
\noindent
Based on the previously defined bijection $\psi$ we give a mapping $\mu$ on $S_n(132)$
and show that it is a bijection on $S_n(132)$, and Theorem \ref{The_bij_mu} proves the desired
equidistribution.

\medskip

Expressing in two different ways the major index of
a permutation, Lemma 2 in \cite{Vaj_13} (see also Corollary 14 in \cite{Vaj_11})
shows that for any permutation $\pi$ (not necessarily in $S_n(132)$) we have 

\begin{equation*}
(\bSac+\baF)\, \pi=
(\bSca +\ba)\, \pi.
\label{Le_2}
\end{equation*}

Actually, $(\ba)\,\pi$ is equal to $\des\,\pi$, and
the above relation becomes

\begin{equation}
(\bSac+\baF)\, \pi=
(\bSca+\des)\, \pi,
\label{Lemma_2_res}
\end{equation}
and the next lemma follows.

\begin{Le}
\label{lem_vv}
For any permutation $\pi$ we have
$$
(\bSac)\, \pi\oplus 1= (\bSca+\des)\,\pi.
$$

\end{Le}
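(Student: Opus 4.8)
The plan is to evaluate $(\bSac)\,\pi\oplus 1$ directly, by tracking how the appended maximal symbol interacts with the pattern, and then to invoke identity~\eqref{Lemma_2_res} to conclude.

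First I would note that $\pi\oplus 1$ is obtained from $\pi\in S_n$ by appending the new largest value $n+1$ in the last position. I then partition the occurrences of $\bSac$ in $\pi\oplus 1$ according to whether or not they use this last position. Those that avoid it lie entirely among the first $n$ positions, whose entries $\pi_1,\dots,\pi_n$ are order-isomorphic to $\pi$ and carry the same adjacency relations; hence they are exactly the occurrences of $\bSac$ in $\pi$ and contribute $(\bSac)\,\pi$.

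Next I would count the occurrences that do use the last symbol. As $n+1$ is the global maximum, in any such occurrence it must play the role of the largest symbol of the pattern (the one labelled~$3$). Since in $\bSac$ the symbols labelled $1$ and $3$ must be adjacent and the $3$ now sits in the last position, the $1$ is forced to be the entry $\pi_n$ in position~$n$. The symbol labelled $2$ may then be any earlier entry whose value lies strictly between $\pi_n$ and $n+1$, that is, any of the entries among $\pi_1,\dots,\pi_{n-1}$ that exceed $\pi_n$. There are exactly $n-\pi_n$ such entries, which is precisely $(\baF)\,\pi$. Hence
\[
(\bSac)\,\pi\oplus 1=(\bSac)\,\pi+(\baF)\,\pi .
\]

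Finally, substituting identity~\eqref{Lemma_2_res} turns the right-hand side into $(\bSca+\des)\,\pi$, giving the claimed equality. The one point requiring care is the middle step: confirming that the maximal appended symbol forces the $1$ into position~$n$, and that the admissible choices for the symbol~$2$ are counted by $(\baF)\,\pi=n-\pi_n$. Once this bookkeeping is secured, identity~\eqref{Lemma_2_res} does the remaining work.
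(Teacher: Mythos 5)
Your proof is correct, and it leans on the same external ingredient as the paper, namely relation (\ref{Lemma_2_res}), but it arranges the argument differently. The paper applies (\ref{Lemma_2_res}) to the permutation $\pi\oplus 1$ itself, so that the left-hand side $(\bSac+\baF)\,\pi\oplus 1$ collapses to $(\bSac)\,\pi\oplus 1$ because $(\baF)\,\pi\oplus 1=0$, while the right-hand side simplifies via the trivial observations $\des\,\pi\oplus 1=\des\,\pi$ and $(\bSca)\,\pi\oplus 1=(\bSca)\,\pi$; no occurrence of $\bSac$ is ever analysed. You instead apply (\ref{Lemma_2_res}) to $\pi$ and do the combinatorial work on the $\bSac$ side: you classify the occurrences of $\bSac$ in $\pi\oplus 1$ according to whether they use the appended maximum, and show that the new ones are counted by $(\baF)\,\pi=n-\pi_n$ (the appended symbol is forced to play the role of the $3$, hence $\pi_n$ plays the $1$, and the $2$ is any earlier entry exceeding $\pi_n$). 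This bookkeeping is sound, and it has the merit of exhibiting explicitly where the correction term $(\baF)\,\pi$ comes from; the paper's version is shorter because appending a maximum in the last position trivially kills or preserves each of the three statistics on the other side of the identity. The two proofs are equivalent in content: your intermediate identity $(\bSac)\,\pi\oplus 1=(\bSac)\,\pi+(\baF)\,\pi$ is exactly what one obtains by subtracting the instance of (\ref{Lemma_2_res}) for $\pi$ from the instance for $\pi\oplus 1$.
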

\proof
From relation (\ref{Lemma_2_res}) we have 
$$
(\bSac+\baF)\, \pi\oplus 1=
(\bSca+\des)\, \pi\oplus 1,
$$
and the statement holds by considering that 
$(\baF)\, \pi\oplus 1=0$, $\des\, \pi\oplus 1=\des\,\pi$, 
and $(\bSca)\, \pi\oplus 1=(\bSca)\, \pi$.
\endproof

In the proof of the following theorem we will use the next 
easy to understand fact.

\begin{Fact} For any permutation $\pi$, we have 
$(\baSc)\, \pi\oplus 1=(\baSc+\des)\,\pi$.
\label{fact_1}
\end{Fact}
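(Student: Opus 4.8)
The plan is to classify the occurrences of $\baSc$ in $\pi\oplus 1$ according to the position of the symbol playing the role of the largest letter (``$3$'') in the pattern. Recall that, for $\pi\in S_n$, the permutation $\pi\oplus 1$ is obtained by appending a new largest symbol $n+1$ at the rightmost position $n+1$, while leaving all of $\pi$ unchanged; so every entry of $\pi$ keeps both its value and its position inside $\pi\oplus 1$.

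An occurrence of $\baSc$ in $\pi\oplus 1$ is a triple of positions $i<i+1<k$, the first two forced to be adjacent by the vincular bond, with $(\pi\oplus 1)_{i+1}<(\pi\oplus 1)_i<(\pi\oplus 1)_k$. First I would split on whether $k\le n$ or $k=n+1$. When $k\le n$, all three chosen positions lie inside $\pi$ and the involved values coincide with those of $\pi$, so these occurrences are exactly the occurrences of $\baSc$ in $\pi$; they contribute $(\baSc)\,\pi$.

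For the remaining case $k=n+1$, the largest letter is the appended value $n+1$, which dominates every entry of $\pi$; hence the condition $(\pi\oplus 1)_i<(\pi\oplus 1)_k$ is automatic and imposes nothing. What is left is a pair of adjacent positions $i<i+1\le n$ with $\pi_{i+1}<\pi_i$, that is, a descent of $\pi$. Conversely, each descent of $\pi$ yields exactly one such occurrence by taking $k=n+1$. Thus this case contributes precisely $\des\,\pi$, and summing the two cases gives $(\baSc)\,\pi\oplus 1=(\baSc)\,\pi+\des\,\pi=(\baSc+\des)\,\pi$.

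There is essentially no serious obstacle here; the statement is a direct bookkeeping argument. The only points requiring a little care are checking that the bonded pair of the pattern reads off a descent correctly (the two adjacent positions are the descent top $\pi_i$ and bottom $\pi_{i+1}$, matching the roles of the middle and smallest letters of $213$), and confirming that the appended maximum can always serve as the ``$3$'' with no additional order condition, so that the count of new occurrences is exactly $\des\,\pi$ and not some proper subset of the descents.
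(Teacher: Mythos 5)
Your argument is correct and is exactly the intended justification: the paper states this as an unproved ``easy to understand'' Fact, so the direct verification it leaves to the reader is precisely your case split on whether the letter playing the role of the largest pattern entry lies inside $\pi$ (contributing $(\baSc)\,\pi$) or is the appended maximum, in which case the comparison with $n+1$ is vacuous and the bonded adjacent pair is exactly a descent of $\pi$ (contributing $\des\,\pi$). Nothing is missing.
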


\medskip

The mapping $\mu$ on $S_n(132)$ is recursively defined as:
if $\pi$ is the empty permutation, then $\mu(\pi)=\pi$; 
and if $\pi\in S_n(132)$, $n\geq 1$, with $\pi=\alpha \ominus (\beta \oplus 1)$
for some $132$-voiding permutations $\alpha$ and $\beta$,
then  
$$
\mu(\pi)=\mu(\alpha)\ominus (\mu(\psi(\beta))\oplus1),
$$
where
$\psi$ is the bijection define in Subsection 3.2.
See Figure \ref{cons_mu} for this recursive construction.

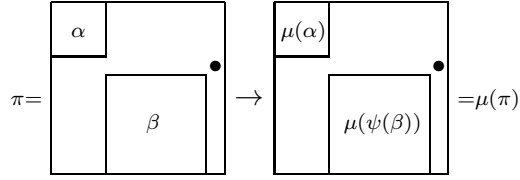
\begin{figure}[h]
\begin{center}
\begin{tabular}{c}
\unitlength=4mm
\begin{picture}(0.3,5)
\put(-0.7,2.3){$\scriptstyle\pi=$}
\end{picture}
\unitlength=1.2mm
\begin{picture}(19,19)
\put(0,0){\line(1,0){19}}
\put(0,0){\line(0,1){19}}
\put(19,0){\line(0,1){19}}
\put(0,19){\line(1,0){19}}

\put(0,13){\line(1,0){6}}
\put(6,13){\line(0,1){6}}

\put(6,0){\line(0,1){11}}
\put(6,11){\line(1,0){11}}
\put(17,0){\line(0,1){11}}

\put(2.2,15.2){$\scriptstyle\alpha$}
\put(10.4,5.){$\scriptstyle\beta$}
\put(18,12){\circle*{1.2}} 
\end{picture} 
\unitlength=4mm
\begin{picture}(1,5)
\put(0.,2.3){$\rightarrow$}
\end{picture}
\unitlength=1.2mm
\begin{picture}(19,19)
\put(0,0){\line(1,0){19}}
\put(0,0){\line(0,1){19}}
\put(19,0){\line(0,1){19}}
\put(0,19){\line(1,0){19}}

\put(0,13){\line(1,0){6}}
\put(6,13){\line(0,1){6}}

\put(6,0){\line(0,1){11}}
\put(6,11){\line(1,0){11}}
\put(17,0){\line(0,1){11}}

\put(0.7,15.2){$\scriptstyle\mu(\alpha)$}
\put(7.6,5.){$\scriptstyle\mu(\psi(\beta))$}
\put(18,12){\circle*{1.2}} 
\end{picture} 
\unitlength=4mm
\begin{picture}(0.3,5)
\put(0,2.3){$\scriptstyle=\mu(\pi)$}
\end{picture}
\end{tabular}
\end{center}
\caption{
\label{cons_mu}
The recursive definition of $\mu(\pi)$.}
\end{figure}
 
Since $\psi$ is a bijection on $S_n(132)$, $n\geq 0$, it follows that $\mu(\pi)$ avoids 
$132$ whenever $\pi$ does so, and thus $\mu(\pi)\in S_n(132)$ for any 
$\pi\in S_n(132)$. With the notations above, it is clear that 
$(\abF)\,\mu(\pi)=(\abF)\,\pi=|\beta|$ and considering again the bijectivity of 
$\psi$,
by induction on $n$ it follows that $\mu$ is injective, and thus bijective.

\begin{The}
If $\pi\in S_n(132)$, then
$$
(\baSc,\des,\abF)\,\mu(\pi)=(\bSac,\des,\abF)\,\pi.
$$
\label{The_bij_mu}
\end{The}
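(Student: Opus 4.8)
The plan is to prove the three-component identity by induction on $n$. Writing $\pi=\alpha\ominus(\beta\oplus1)$ via Fact~\ref{fact_decomposition}, the recursion gives $\mu(\pi)=\mu(\alpha)\ominus(\mu(\psi(\beta))\oplus1)$, and the induction hypothesis is available for $\alpha$ (length $<n$) and for $\psi(\beta)$ (length $|\beta|<n$). The base case $n=0$ is immediate. The $\abF$-component is already recorded before the statement, namely $(\abF)\,\mu(\pi)=(\abF)\,\pi=|\beta|$. For the $\des$-component I would apply Fact~\ref{fact_des} to the outer skew sum on each side: when $\alpha\neq\emptyset$, $\des\,\mu(\pi)=\des\,\mu(\alpha)+\des\,\mu(\psi(\beta))+1$; the induction hypothesis gives $\des\,\mu(\alpha)=\des\,\alpha$ and $\des\,\mu(\psi(\beta))=\des\,\psi(\beta)$, and Theorem~\ref{The_psi} gives $\des\,\psi(\beta)=\des\,\beta$, so this matches $\des\,\pi=\des\,\alpha+\des\,\beta+1$ (the case $\alpha=\emptyset$ is identical without the $+1$).

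The heart of the argument is the $\baSc$-component, and the first thing I would isolate is that both $(\baSc)$ and $(\bSac)$ are \emph{additive over skew sums}:
$$
(\baSc)\,(\gamma\ominus\delta)=(\baSc)\,\gamma+(\baSc)\,\delta
\qquad\text{and}\qquad
(\bSac)\,(\gamma\ominus\delta)=(\bSac)\,\gamma+(\bSac)\,\delta .
$$
Both patterns are the classical $213$, whose largest symbol occupies the rightmost position; since every entry of $\gamma$ exceeds every entry of $\delta$ in $\gamma\ominus\delta$, that largest symbol cannot lie in $\delta$, and being rightmost it forces the whole occurrence into $\gamma$. Hence no occurrence straddles the two summands, and because $\gamma$ and $\delta$ each occupy a block of consecutive positions the adjacency decoration is inherited verbatim within each block.

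Granting this, applying additivity to the outer decomposition of $\mu(\pi)$ gives
$$
(\baSc)\,\mu(\pi)=(\baSc)\,\mu(\alpha)+(\baSc)\,\bigl(\mu(\psi(\beta))\oplus1\bigr).
$$
The first term equals $(\bSac)\,\alpha$ by the induction hypothesis. For the second, I would peel off the trailing $\oplus1$ with Fact~\ref{fact_1} to get $(\baSc+\des)\,\mu(\psi(\beta))$, then use the induction hypothesis on $\psi(\beta)$ to rewrite it as $(\bSac)\,\psi(\beta)+\des\,\psi(\beta)$, convert this by Theorem~\ref{The_psi} into $(\bSca)\,\beta+\des\,\beta=(\bSca+\des)\,\beta$, and finally collapse it by Lemma~\ref{lem_vv} to $(\bSac)\,(\beta\oplus1)$. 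This yields $(\baSc)\,\mu(\pi)=(\bSac)\,\alpha+(\bSac)\,(\beta\oplus1)$, which by additivity applied to $\pi=\alpha\ominus(\beta\oplus1)$ is exactly $(\bSac)\,\pi$, closing the induction.

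The step I expect to be the main obstacle is not computational but bookkeeping: one must thread the $\beta$-side through the correct chain of previously established results and, in particular, apply the induction hypothesis to $\psi(\beta)$ rather than to $\beta$, since it is $\psi(\beta)$ (not $\beta$) on which $\mu$ acts in the recursion. The additivity observation itself is easy once one notices that the decisive feature is the position of the maximal symbol of $213$, so that the adjacency decorations play no role in ruling out cross-summand occurrences; they matter only in that occurrences internal to a block correspond exactly to occurrences of the decorated pattern in that block, which holds because each summand occupies consecutive positions.
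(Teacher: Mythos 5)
Your proof is correct and follows essentially the same route as the paper's: the same induction on the decomposition $\pi=\alpha\ominus(\beta\oplus 1)$, with the $\beta$-side handled by the identical chain Fact~\ref{fact_1} $\rightarrow$ induction hypothesis applied to $\psi(\beta)$ $\rightarrow$ Theorem~\ref{The_psi} $\rightarrow$ Lemma~\ref{lem_vv}, and the same treatment of the $\des$ and $\abF$ components. The only difference is that you state and justify the skew-sum additivity of $(\baSc)$ and $(\bSac)$ explicitly, a step the paper uses silently.
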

\proof 
Clearly, $(\abF)\,\mu(\pi)=(\abF)\,\pi$,
and the remaining of the proof is by induction on $n$.
Obviously, the statement holds for $n=0$, and let suppose that it is true for any $n<m$, 
and consider $\pi=\alpha\ominus (\beta\oplus1)\in S_m(132)$ for
some $132$-avoiding permutations $\alpha$ and $\beta$.

The bijection $\mu$ preserves $\des$ statistic. Indeed, 
(using the Iverson bracket notation) considering 
$[|\alpha|\neq 0]$ equal to $0$ (resp. $1$) if $\alpha$ is empty (resp. not empty)
we have
\begin{equation*}
\begin{array}{rclr}
\des\, \mu(\pi)& = & \des\, \mu(\alpha)\ominus (\mu(\psi(\beta))\oplus1) \\
               & = & \des\, \mu(\alpha) + \des\, \mu(\psi(\beta))+[|\alpha|\neq 0]   \\
	       & = & \des\, \alpha + \des\, \psi(\beta)+[|\alpha|\neq 0]    &\mbox{(by the induction hypothesis)}\\
               & = & \des\, \alpha + \des\, \beta+[|\alpha|\neq 0]    &\mbox{(since }\psi \mbox{ preserves } \des \mbox{)}\\
               & = & \des\, \alpha \ominus (\beta \oplus 1)\\ 
               & = & \des\, \pi. 
\end{array}
\end{equation*}

Finally, we show that $(\baSc)\,\mu(\pi)=(\bSac)\,\pi$.

\begin{equation*}
\begin{array}{rclr}
(\baSc)\,\mu(\pi) & = & (\baSc)\,\mu(\alpha)\ominus (\mu(\psi(\beta))\oplus1) & \\
                            & = & (\baSc)\,\mu(\alpha) + (\baSc)\, \mu(\psi(\beta))\oplus1 &\\
                            & = & (\baSc)\,\mu(\alpha) + (\baSc)\, \mu(\psi(\beta)) + \des\,\mu(\psi(\beta)) &
   \mbox{(by Fact \ref{fact_1})}\\
                            & = & (\bSac)\,\alpha + (\bSac)\, \psi(\beta) + \des\,\beta &
   \mbox{(by the induction hypothesis}\\
                            & & & \mbox{and $\mu$ and $\psi$ preserve $\des$)}\\
                            & = & (\bSac)\,\alpha + (\bSca)\,\beta + \des\,\beta &  \mbox{(by Theorem \ref{The_psi})}\\
                            & = & (\bSac)\,\alpha + (\bSac)\,\beta \oplus 1 &
			   \mbox{(by Lemma \ref{lem_vv}})\\
                            & = & (\bSac)\,\alpha \ominus (\beta \oplus 1)&\\
                            & = & (\bSac)\,\pi.&
\end{array}
\end{equation*}

\endproof

\medskip

\noindent
{\bf 3.4 Equidistribution of $(\bcSa,\cSab,\des)$ and $(\cSab,\bcSa,\des)$}


\medskip
\noindent
It is easy to see that the inverse of a permutation (defined at the end of Section \ref{Notations})
satisfies: if $\pi=\alpha \ominus (\beta \oplus 1)$, then 
$\pi^{-1}=(\beta^{-1} \oplus 1)\ominus\alpha^{-1}$, see Figure \ref{cons_inv}.

As mentioned at the end of Section \ref{Notations}, the inverse of a vincular pattern
is not longer a vincular pattern, however we have the following.

\begin{Pro}
\label{Pro_1}
For any $\pi\in S_n(132)$, we have
$$
(\bcSa,\cSab,\des)\,\pi^{-1}=(\cSab,\bcSa,\des)\,\pi.
$$
\end{Pro}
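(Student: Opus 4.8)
The plan is to split the single triple identity into its three scalar components and verify each by induction on $n$, built on the decomposition $\pi=\alpha\ominus(\beta\oplus 1)$ of Fact~\ref{fact_decomposition} together with the inverse rule $\pi^{-1}=(\beta^{-1}\oplus 1)\ominus\alpha^{-1}$ recalled just above. The second component, $(\cSab)\,\pi^{-1}=(\bcSa)\,\pi$, is not independent: since inversion is an involution that preserves $132$-avoidance (the pattern $132$ being its own inverse), applying the first component to $\pi^{-1}$ in place of $\pi$ immediately yields the second. Hence it suffices to prove, for every $\pi\in S_n(132)$, that (a) $\des\,\pi^{-1}=\des\,\pi$ and (b) $(\bcSa)\,\pi^{-1}=(\cSab)\,\pi$.

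For (a) I would induct on $n$. Writing $\pi=\alpha\ominus(\beta\oplus 1)$ and $\pi^{-1}=(\beta^{-1}\oplus 1)\ominus\alpha^{-1}$, Fact~\ref{fact_des} gives $\des\,(\beta\oplus 1)=\des\,\beta$ and $\des\,(\beta^{-1}\oplus 1)=\des\,\beta^{-1}$; distinguishing only whether $\alpha$ (equivalently $\alpha^{-1}$) is empty, the skew-sum rule then yields $\des\,\pi=\des\,\alpha+\des\,\beta+[|\alpha|\neq 0]$ and $\des\,\pi^{-1}=\des\,\alpha^{-1}+\des\,\beta^{-1}+[|\alpha|\neq 0]$, so the induction hypotheses $\des\,\alpha^{-1}=\des\,\alpha$ and $\des\,\beta^{-1}=\des\,\beta$ close the step.

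For (b), the heart of the argument, I would count occurrences on both sides through the block structure of the two decompositions. In $\pi=\alpha\ominus(\beta\oplus 1)$ the $\alpha$-block carries the top values in the leftmost positions and the $(\beta\oplus 1)$-block the bottom values in the rightmost positions, its maximal entry sitting last. An occurrence of $\cSab$ (a $312$ with its last two entries in adjacent positions) either lies entirely in one block, contributing $(\cSab)\,\alpha$ and $(\cSab)\,\beta$ (the appended maximum of $\beta\oplus 1$ being unable to play any role in a $312$), or has its large ``$3$'' in the $\alpha$-block and its adjacent ascent ``$12$'' inside the $(\beta\oplus 1)$-block, contributing $|\alpha|\,(|\beta|-\des\,\beta)$, the second factor being the number of adjacent ascents of $\beta\oplus 1$. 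This gives
\[(\cSab)\,\pi=(\cSab)\,\alpha+(\cSab)\,\beta+|\alpha|\,(|\beta|-\des\,\beta).\]
Running the symmetric analysis for $\bcSa$ (a $231$ with its first two entries adjacent) on $\pi^{-1}=(\beta^{-1}\oplus 1)\ominus\alpha^{-1}$, whose leading block $\beta^{-1}\oplus 1$ carries the top values and places the global maximum at its last position, gives
\[(\bcSa)\,\pi^{-1}=(\bcSa)\,\beta^{-1}+(\bcSa)\,\alpha^{-1}+|\alpha|\,(|\beta|-\des\,\beta),\]
where I use $\des\,\beta^{-1}=\des\,\beta$ from part (a). The two expansions have identical cross-terms, so the induction hypotheses $(\bcSa)\,\alpha^{-1}=(\cSab)\,\alpha$ and $(\bcSa)\,\beta^{-1}=(\cSab)\,\beta$ finish the proof.

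The main obstacle, and the reason this cannot be a one-line symmetry argument, is exactly the point flagged just before the statement: the inverse of a vincular pattern is bivincular, so transporting an occurrence of $\cSab$ through inversion would impose a value-adjacency, not a position-adjacency, constraint. I would sidestep this by never transporting occurrences; instead I count $(\cSab)\,\pi$ and $(\bcSa)\,\pi^{-1}$ \emph{natively}, each through its own decomposition, and check that the resulting recursions coincide term by term. The $132$-avoidance is essential precisely here: it is what forces both $\pi$ and $\pi^{-1}$ into the rigid two-block shape that makes the spanning contributions equal to the same $|\alpha|\,(|\beta|-\des\,\beta)$. The most error-prone points to verify are the two distinguished boundary entries (the appended maximum of $\beta\oplus 1$ and the global maximum of $\beta^{-1}\oplus 1$), which must be shown to occupy positions barring them from any occurrence, and the empty-$\alpha$ and empty-$\beta$ cases, where the cross-term vanishes and the expansions must still agree.
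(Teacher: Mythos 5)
Your proof is correct, and it follows the same skeleton as the paper's: induction on $n$ via the decomposition $\pi=\alpha\ominus(\beta\oplus 1)$, the inverse rule $\pi^{-1}=(\beta^{-1}\oplus 1)\ominus\alpha^{-1}$, separate verification of the $\des$ component, native counting of occurrences on each side, and the involution trick to deduce the second coordinate from the first. The genuine difference is in the cross-terms, and there your version is the right one. The paper asserts that the only spanning occurrences of $\cSab$ in $\pi$ are those using $a\in\alpha$, $b$ the last symbol of $\beta$ and $c$ the last symbol of $\pi$, giving a cross-term $|\alpha|$ (and, symmetrically, $|\alpha^{-1}|$ for $\bcSa$ in $\pi^{-1}$). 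This misses the spanning occurrences whose adjacent ascent lies entirely inside $\beta$ (resp.\ $\beta^{-1}$). Concretely, for $\pi=4123$ (so $\alpha=1$, $\beta=12$) one has $(\cSab)\,\pi=2$ (the subsequences $412$ and $423$) while the paper's formula gives $0+0+|\alpha|=1$; likewise $\pi^{-1}=2341$ has $(\bcSa)\,\pi^{-1}=2$ (the subsequences $231$ and $341$) against the paper's $1$. Your cross-term $|\alpha|\,(|\beta|-\des\,\beta)$, i.e.\ $|\alpha|$ times the number of adjacent ascents of $\beta\oplus 1$, gives $2$ on both sides and is the complete count. The paper's two omissions are parallel and of equal size $|\alpha|(|\beta|-1-\des\,\beta)$ (equal on the two sides precisely because $\des\,\beta^{-1}=\des\,\beta$), so its conclusion survives, but as written its case analysis is incomplete; your proof is the corrected version of it.

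One small caveat in your write-up: the parenthetical claim that the appended maximum of $\beta\oplus 1$ is ``unable to play any role in a $312$'' is true only for occurrences confined to that block (which is how you use it, to get $(\cSab)\,(\beta\oplus 1)=(\cSab)\,\beta$). Globally, that entry does serve as the ``$2$'' in spanning occurrences --- these are exactly the ones the paper does count --- and your cross-term correctly includes them, since the final pair of $\beta\oplus 1$ is one of its $|\beta|-\des\,\beta$ adjacent ascents. You may want to rephrase that remark so it cannot be read as a global claim.
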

\proof
Trivially, the statement holds for $n=0$, and let suppose that it is true for any $n<m$, 
and consider $\pi=\alpha \ominus (\beta \oplus 1)\in S_m(132)$ for
some $132$-avoiding permutations $\alpha$ and $\beta$. 

If $\alpha$ is empty, then $\des\,\pi^{-1}=\des\,\beta^{-1}$ and
$\des\,\pi=\des\,\beta$; 
otherwise,
$\des\,\pi^{-1}=\des\,\beta^{-1}+\des\,\alpha^{-1}+1$ and
$\des\,\pi=\des\,\alpha+\des\,\beta+1$.
In both cases, by the induction hypothesis it follows that $\des\,\pi^{-1}=\des\,\pi$.

An occurrence of $\bcSa$ in $\pi^{-1}$ can be found  either 
in $\beta^{-1}$, or in $\alpha^{-1}$, or when $\beta$ is not empty, 
has the form $abc$ with $a$ the last symbol of $\beta^{-1}$, $b=m$
(the largest symbol of $\pi$) and 
$c$ a symbol of $\alpha^{-1}$.
Similarly, an occurrence of $\cSab$ in $\pi$ can be found either 
in $\alpha$, or in $\beta$, or when $\beta$ is not empty, 
has the form $abc$ with $a$ a symbol of $\alpha$, $b$ 
the last symbol of $\beta$ and $c$ the last symbol of $\pi$.

Thus, when $\beta$ is not empty
$$
(\bcSa)\,\pi^{-1}=(\bcSa)\,\beta^{-1}+(\bcSa)\,\alpha^{-1}+|\alpha^{-1}|
$$
and 
$$
(\cSab)\,\pi=(\cSab)\,\alpha+(\cSab)\,\beta +|\alpha|,
$$
and by the induction hypothesis it follows that 
$(\bcSa)\,\pi^{-1}=(\cSab)\,\pi$.
And when $\beta$ is empty
$$
(\bcSa)\,\pi^{-1}=(\bcSa)\,\alpha^{-1}
$$
and 
$$
(\cSab)\,\pi=(\cSab)\,\alpha,
$$
and again $(\bcSa)\,\pi^{-1}=(\cSab)\,\pi$.

Since $^{-1}$ is an involution on $S_n(132)$, it follows that
$(\bcSa)\,\pi^{-1}=(\cSab)\,\pi$, and the statement holds.
\endproof

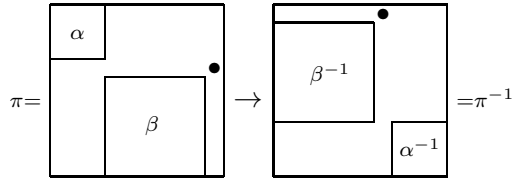
\begin{figure}[h]
\begin{center}
\begin{tabular}{c}
\unitlength=4mm
\begin{picture}(0.3,5)
\put(-0.7,2.3){$\scriptstyle\pi=$}
\end{picture}
\unitlength=1.2mm
\begin{picture}(19,19)
\put(0,0){\line(1,0){19}}
\put(0,0){\line(0,1){19}}
\put(19,0){\line(0,1){19}}
\put(0,19){\line(1,0){19}}

\put(0,13){\line(1,0){6}}
\put(6,13){\line(0,1){6}}

\put(6,0){\line(0,1){11}}
\put(6,11){\line(1,0){11}}
\put(17,0){\line(0,1){11}}

\put(2.2,15.2){$\scriptstyle\alpha$}
\put(10.4,5.){$\scriptstyle\beta$}
\put(18,12){\circle*{1.2}} 
\end{picture} 
\unitlength=4mm
\begin{picture}(1,5)
\put(0.,2.3){$\rightarrow$}
\end{picture}
\unitlength=1.2mm
\begin{picture}(19,19)
\put(0,0){\line(1,0){19}}
\put(0,0){\line(0,1){19}}
\put(19,0){\line(0,1){19}}
\put(0,19){\line(1,0){19}}

\put(0,6){\line(1,0){11}}
\put(11,6){\line(0,1){11}}
\put(0,17){\line(1,0){11}}

\put(13,0){\line(0,1){6}}
\put(13,6){\line(1,0){6}}

\put(4.,10.5){$\scriptstyle\beta^{-1}$}
\put(13.8,2.2){$\scriptstyle\alpha^{-1}$}
\put(12,18){\circle*{1.2}} 
\end{picture} 
\unitlength=4mm
\begin{picture}(0.3,5)
\put(0,2.3){$\scriptstyle=\pi^{-1}$}
\end{picture}
\end{tabular}
\end{center}
\caption{
\label{cons_inv}
The recursive construction of $\pi^{-1}$, for $\pi\in S(132)$.}
\end{figure}

\section{Conclusions}

We showed bijectively the joint equidistribution on the set $S_n(132)$ of 
$132$-voiding permutations of some length three vincular patterns together with other statistics. 
In particular, for the sets of vincular patterns $\{\bSca,\bSac,\baSc\}$ and
$\{\bcSa,\cSab\}$, we showed that
the patterns within each set  are equidistributed 
on $S_n(132)$.
By applying permutation symmetries, other similar results can be derived.
For instance, from the equidistribution of $\baSc$ and $\bSac$ on $S_n(132)$ 
(belonging to the first set, see Subsection 3.3) it follows, by applying

\begin{itemize}
\item the reverse operation, the equidistribution of $\cSab$ and $\caSb$
      on $S_n(231)$,
\item the complement operation, the equidistribution of $\bcSa$ and $\bSca$
      on $S_n(312)$, and
\item the complement and the reverse operations
      (in any order), the equidistribution of $\aScb$ and $\acSb$
      on $S_n(213)$.
\end{itemize}
Moreover, computer experiments show that, up to these two symmetries, the
patterns in 
$\{\bSca,\bSac,\baSc\}$ and those in
$\{\bcSa,\cSab\}$ 
are the only length three proper (not classical nor adjacent) 
vincular patterns 
which are equidistributed on a set of permutations avoiding a classical  length three
pattern.


\begin{thebibliography}{10}

\bibitem{BabSteim} 
E. Babson, E. Steingr{\'\i}msson, 
Generalized permutation petterns and a classification of Mahonian statistics,
{\it S\'em. Lothar. Combin.} (electronic), {\bf 44} (2000).

\bibitem{BBS_10} 
M. Barnabei, F. Bonetti, M. Silimbani,
The joint distribution of consecutive patterns and descents in permutations avoiding $3\mn 1\mn 2$. 
{\it Eur. J. Comb.}  {\bf 31}(5 )(2010), 1360-1371.

\bibitem{Bona_13} 
M. B{\'o}na, 
Surprising symmetries in objects counted by {C}atalan numbers. 
{\it Electr. J. Comb.} {\bf 19}(1) (2012), P62.

\bibitem{Bur_Eliz_13} 
A. Burstein, S. Elizalde, 
Total occurrence statistics on restricted permutations,
ArXiv 2013, {\tt http://arxiv.org/pdf/1305.3177v1.pdf}.

\bibitem{Hom_12} 
C. Homberger,
Expected patterns in permutation classes,
{\it Electr. J. Comb.}  {\bf 19}(3) (2012), P43.


\bibitem{Kit} 
S. Kitaev, Patterns in permutations and words, Springer-Verlag, 2011.


\bibitem{Vaj_13} 
V. Vajnovszki, 
Lehmer code transforms and Mahonian statistics on permutations, 
{\it Discrete Mathematics}, {\bf 313} (2013), 581--589.
\bibitem{Vaj_11} 
V. Vajnovszki, 
A new Euler-Mahonian constructive bijection,  
{\it Discrete Applied Mathematics}, {\bf 159} (2011), 1453--1459.

\end{thebibliography}
\end{document}